\theoremstyle{plain}
\newtheorem{theorem}{Theorem}[section]
\newtheorem{lemma}[theorem]{Lemma}
\newtheorem{corollary}[theorem]{Corollary}
\newtheorem{proposition}[theorem]{Proposition}
\newtheorem{cor}[theorem]{Corollary}
\theoremstyle{definition}
\newtheorem{defi}[theorem]{Definition}
\numberwithin{equation}{section}
\newcommand{\C}{\mathbb{C}}
\newcommand{\T}{\mathcal{T}}
\newcommand{\I}{\mathcal{I}}
\newcommand{\N}{\mathbb{N}}
\newcommand{\E}{\mathcal{E}}
\renewcommand{\O}{\mathcal{O}}
\begin{document}
\title[Endomorphisms and Toeplitz Algebras]{Toeplitz Algebras of Correspondences and Endomorphisms of Sums of Type I Factors}
\author{Philip M. Gipson}
\address{Department of Mathematics \\ State University of New York College at Cortland \\ Cortland, NY 13045-0900}
\email{philip.gipson@cortland.edu}
\subjclass[2010]{46L05, 46L08, 46L55}
\keywords{Toeplitz Algebras, Endomorphisms, $C^*$-correspondences, Graphs}

\begin{abstract}
It is a well-known fact that endomorphisms of $B(H)$ are intimately connected with families of mutually orthogonal isometries, i.e.\ with representations of the so-called Toeplitz $C^*$-algebras. In this paper we consider a natural generalization of this connection between the representation theory of certain $C^*$-algebras associated to graphs and endomorphisms of certain von Neumann subalgebras of $B(H)$. Our primary results give criteria by which it may be determined if two representations give rise to equal or conjugate endomorphisms.
\end{abstract}

\maketitle

\section*{INTRODUCTION}

The connection between endomorphisms of factors and families of isometric operators has been explored most notably by Arveson \cite{arveson}, among others \cite{brenken1,brenken2,laca,longo}. In \cite{laca}, Laca determines that given a normal $*$-endomorphism $\alpha$ of $B(H)$ there exists an $n\in\N\cup\{\infty\}$ and $*$-representation $\pi:\E_n\to B(H)$, where $\E_n$ denotes the Toeplitz algebra for $n$ orthogonal isometries $v_1,...,v_n$, such that
\begin{equation*}\alpha(T)=\sum_{i=1}^n\pi(v_i)T\pi(v_i)^*\end{equation*}
for each $T\in B(H)$. The $n$ value is unique but the representation $\pi$ may differ by automorphisms of $\E_n$ which arise specifically from unitary transformations of the Hilbert space $\ell^2(\{v_1,...,v_n\})\subseteq\E_n$ \cite[Proposition 2.2]{laca}. 

In \cite{brenken1} and \cite{brenken2} Brenken extends the connection by determining that, for a von Neumann algebra which can be decomposed into a direct sum of Type I factors, a certain class of $*$-endomorphisms correspond to representations of certain $C^*$-algebras associated with (possibly infinite) matrices which arise as the adjacency matrices for directed graphs. The $*$-endomorphisms studied by Brenken are, however, required to either be unital \cite{brenken1} or satisfy a number of restrictive conditions \cite[pp 25]{brenken2}.

Our results extend the work of Brenken by eliminating the conditions imposed on the $*$-endomorphisms. The fundamental difference in our approach is that we will investigate representations of the Toeplitz algebra of a $C^*$-correspondence, whereas Brenken concerned himself with the so-called relative Cuntz-Pimnser algebras. We recover Brenken's results as a special case when the endomorphisms are assumed unital. Our primary result in this line is Theorem \ref{theorem3}.

Our results also continue the spirit of Laca's investigations which allow for equivalencies between $*$-endomorphisms to be encoded as transformations of an underlying linear object: a Hilbert space in the case of Laca and a $C^*$-correspondence in the present work. Our primary results along these lines are Theorems \ref{theorem1} and \ref{theorem2}.

\section{PRELIMINARIES}
First we will establish our terminology and notation.

\begin{defi} A \emph{graph} is a tuple $E=(E^0,E^1,r,s)$ consisting of a \emph{vertex set} $E^0$, an \emph{edge set} $E^1$, and \emph{range} and \emph{source} maps $r,s:E^1\to E^0$.
\end{defi}

We will only consider graphs where $E^0$ and $E^1$ are at most countable.

\begin{defi} Let $A$ be a $C^*$-algebra. A set $X$ is a \emph{$C^*$-correspondence over $A$} provided that it is a right Hilbert $A$-module and there is a $*$-homomorphism $\phi:A\to L(X)$, where $L(X)$ denotes the space of adjointable $A$-module homomorphisms from $X$ to itself. 
\end{defi} 

Given a $C^*$-correspondence $X$ over $A$, we will denote the $A$-valued inner product of $x,y\in X$ by ``$\langle x,y\rangle_A$" (perhaps omitting the $A$) and the right action of $a\in A$ on $x\in X$ will be written as ``$x\cdot a$". The map $\phi:A\to L(X)$ may sometimes be written as $\phi_X$ for clarity.

Our primary objects of study will be certain $C^*$-correspondences which arise from graphs. The following constructions are due originally to Fowler and Raeburn \cite[Example 1.2]{fowlerraeburn}, although we will adopt the modern convention for the roles of $r$ and $s$. 

\begin{defi} Given a graph $E$, the \emph{graph correspondence} $X(E)$ is the set of all functions $x:E^1\to\C$ for which $\hat{x}(v):=\sum_{e\in s^{-1}(v)}|x(e)|^2$ extends to a function $\hat{x}\in C_0(E^0)$. We give $X(E)$ the structure of a $C^*$-correspondence over $C_0(E^0)$ as follows:
\begin{align*}x\cdot a&:e\mapsto x(e)a(s(e)),\\
\phi(a)x&:e\mapsto a(r(e))x(e),\\
\langle x,y\rangle&:v\mapsto\sum_{e\in s^{-1}(v)}\overline{x(e)}y(e).\end{align*}
which is to say that $a\in C_0(E^0)$ acts on the right of $X(E)$ as multiplication by $a\circ s$ and acts on the left as multiplication by $a\circ r$.
\end{defi}

The sets $\{\delta_e:e\in E^1\}$ and $\{\delta_v:v\in E^0\}$ are dense in $X(E)$ and $C_0(E^0)$, respectively, in the appropriate senses. For $e\in E^1$ and $v\in E^0$ we have the following useful relations: $\langle \delta_e,\delta_e\rangle=\delta_{s(e)}$, $\delta_e\cdot\delta_v=\delta_e$ if $v=s(e)$ and is $0$ otherwise, and $\phi(\delta_v)\delta_e=\delta_e$ if $v=r(e)$ and is $0$ otherwise.

\begin{defi}\label{toeplitzrepdef} Given a $C^*$-correspondence $X$ over $A$ and given another $C^*$-algebra $B$, a \emph{Toeplitz representation} of $X$ in $B$ is a pair $(\sigma,\pi)$ consisting of a linear map $\sigma:X\to B$ and a $*$-homomorphism $\pi:A\to B$ such that for all $x,y\in X$ and $a\in A$
	\begin{enumerate}
	\item $\sigma(x\cdot a)=\sigma(x)\pi(a)$,
	\item $\sigma(\phi(a)x)=\pi(a)\sigma(x)$, and
	\item $\pi(\langle x,y\rangle)=\sigma(x)^*\sigma(y).$
	\end{enumerate}
\end{defi}

For a graph correspondence $X(E)$, a Toeplitz representation $(\sigma,\pi)$ is determined entirely by the values $\{\sigma(\delta_e):e\in E^1\}$ and $\{\pi(\delta_v):v\in E^0\}$. Property (iii) of a Toeplitz representation guarantees that $\sigma(\delta_e)$ is a partial isometry with source projection $\pi(\delta_{s(e)})$.

\begin{defi}\cite[Proposition 1.3]{fowlerraeburn} Given a $C^*$-correspondence $X$ over $A$, the \emph{Toeplitz algebra of $X$}, denoted $\T_X$, is the $C^*$-algebra which is universal in the following sense: there exists a Toeplitz representation $(\sigma_u,\pi_u)$ of $X$ in $\T_X$ such that if $(\sigma,\pi)$ is another Toeplitz representation of $X$ in a $C^*$-algebra $B$ then there exists a unique $*$-homomorphism $\rho_{\sigma,\pi}:\T_X\to B$ such that $\sigma=\rho_{\sigma,\pi}\circ\sigma_u$ and $\pi=\rho_{\sigma,\pi}\circ\pi_u$.
\end{defi}

That $\T_X$ exists was proven by Pimnser in \cite{pimsner}.

Given a graph $E$ we may consider the Toeplitz algebra of its graph correspondence, cumbersomely denoted $\T_{X(E)}$. Unless there is danger of confusion, we will abuse notation and make no distinction between elements of $X(E)$ and $C_0(E^0)$ and their images in $\T_{X(E)}$ under the universal maps $\sigma_u$ and $\pi_u$. 

If $\tau:\T_{X(E)}\to B(H)$ is a $*$-representation then, for each $e\in E^1$, $\tau(\delta_e)$ is a partial isometry with source projection $\tau(\delta_{s(e)})$ and range projection contained in $\tau(\delta_{r(e)})$.

If $E$ is the graph with but a single vertex and $n$ edges then $X(E)$ is a Hilbert space of dimension $n$ and $\T_{X(E)}$ is isomorphic to the classical Toeplitz algebra $\E_n$. In this case the elements $\{\delta_e:e\in E^1\}$ are precisely the generating isometries of $\E_n$. The space $X(E)$ plays a significant role in the analysis of endomorphisms of $B(H)$ in \cite{laca}, and it is for this reason that we are considering the generalized Toeplitz algebras $\T_{X(E)}$ in our investigations.

\section{COHERENT UNITARY EQUIVALENCE}

Two graphs $E$ and $F$ are isomorphic if there are two bijections $\psi^0:E^0\to F^0$ and $\psi^1:E^1\to F^1$ for which $r_F\circ\psi^1=\psi^0\circ r_E$ and $s_F\circ\psi^1=\psi^0\circ s_E$. In order to encode such an isomorphism at the level of the graph correspondences $X(E)$ and $X(F)$, we offer the following novel definition.

\begin{defi} Let $X$ and $Y$ be $C^*$-correspondences over $A$ and $B$, respectively. A \emph{coherent unitary equivalence} between $X$ and $Y$ is a pair $(U,\alpha)$ consisting of a bijective linear map $U:X\to Y$ and a $*$-isomorphism $\alpha:A\to B$ for which 
	\begin{enumerate}
	\item $U(x\cdot a)=(Ux)\cdot \alpha(a)$ for all $x\in X$ and $a\in A$,
	\item $U(\phi_X(a)x)=\phi_Y(\alpha(a))Ux$ for all $x\in X$ and $a\in A$, and
	\item $\langle Ux,y\rangle_Y=\alpha(\langle x,U^{-1}y\rangle_X)$ for all $x\in X$ and $y\in Y$.
	\end{enumerate}
Routine calculations will verify that coherent unitary equivalence is an equivalence relation.
\end{defi}

\begin{proposition} If $E$ and $F$ are isomorphic graphs then $X(E)$ and $X(F)$ are coherently unitarily equivalent.
\end{proposition}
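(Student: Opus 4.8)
The plan is to build the coherent unitary equivalence directly from the two bijections furnished by the graph isomorphism. Since $E^0$ and $F^0$ are (at most countable) discrete spaces, $\psi^0$ is automatically a homeomorphism, so precomposition with $(\psi^0)^{-1}$ yields a $*$-isomorphism $\alpha:C_0(E^0)\to C_0(F^0)$, given on point masses by $\alpha(\delta_v)=\delta_{\psi^0(v)}$. Likewise I would define $U:X(E)\to X(F)$ by $Ux=x\circ(\psi^1)^{-1}$, so that $U\delta_e=\delta_{\psi^1(e)}$. Both maps are manifestly linear and bijective, with inverses given by precomposition with $\psi^0$ and $\psi^1$ respectively; the multiplicativity and $*$-preservation that make $\alpha$ a $*$-isomorphism are immediate for a map induced by a bijection of the underlying discrete spaces.

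The one genuinely substantive step is to check that $U$ actually lands in $X(F)$, i.e.\ that $\widehat{Ux}$ extends to an element of $C_0(F^0)$ for every $x\in X(E)$. Here the compatibility $s_F\circ\psi^1=\psi^0\circ s_E$ does all the work: it gives $s_F^{-1}(w)=\psi^1\big(s_E^{-1}((\psi^0)^{-1}(w))\big)$ for each $w\in F^0$, and reindexing the defining sum along $\psi^1$ yields $\widehat{Ux}(w)=\hat{x}((\psi^0)^{-1}(w))$, that is, $\widehat{Ux}=\alpha(\hat{x})$. Since $\hat{x}\in C_0(E^0)$ and $\alpha$ maps $C_0(E^0)$ into $C_0(F^0)$, this shows $Ux\in X(F)$. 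The same computation run for $\langle Ux,Uz\rangle_Y$ in place of $\widehat{Ux}$ simultaneously establishes the inner-product condition in the form $\langle Ux,Uz\rangle_Y=\alpha(\langle x,z\rangle_X)$, from which condition (iii) as stated follows by setting $z=U^{-1}y$.

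The remaining two conditions are routine verifications that hinge on the maps respecting the source and range structure. For (i), expanding $U(x\cdot a)$ and $(Ux)\cdot\alpha(a)$ pointwise at an edge $f\in F^1$ reduces the claim to the identity $\alpha(a)(s_F(f))=a\big((\psi^0)^{-1}(s_F(f))\big)=a\big(s_E((\psi^1)^{-1}(f))\big)$, which is again exactly the source compatibility. Condition (ii) is identical with the range maps in place of the source maps, using $r_F\circ\psi^1=\psi^0\circ r_E$. Having checked (i)--(iii), the pair $(U,\alpha)$ is a coherent unitary equivalence. I expect the whole argument to be short, the only real content being the reindexing that proves $U$ is well-defined; everything else is bookkeeping with the source and range maps.
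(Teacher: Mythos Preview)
Your proposal is correct and follows essentially the same approach as the paper: both build $\alpha$ and $U$ by precomposition with the bijections coming from the graph isomorphism, verify well-definedness via the reindexing of the source fibers, and then check (i)--(iii) directly. The only cosmetic difference is that the paper takes the isomorphism in the direction $F\to E$ (so writes $\alpha(a)=a\circ\psi^0$ and $Ux=x\circ\psi^1$ without inverses), whereas you take it $E\to F$; your observation $\widehat{Ux}=\alpha(\hat x)$ is a slightly cleaner packaging of the same computation.
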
 
\begin{proof} We'll assume $(\psi^0,\psi^1)$ to be an isomorphism from $F$ to $E$.

For $a\in C_0(E^0)$, $\alpha(a):=a\circ\psi^0$ clearly defines a $*$-isomorphism $\alpha:C_0(E^0)\to C_0(F^0)$. For $x\in X(E)$ define $Ux:=x\circ\psi^1$.
For $v\in F^1$ we have
\begin{equation*}\sum_{e\in s_F^{-1}(v)}|Ux(e)|^2=\sum_{e\in s_F^{-1}(v)}|x(\psi^1(e))|^2=\sum_{f\in s_E^{-1}(\psi^0(v))}|x(f)|^2\end{equation*}
(using the fact that if $s_E(e)=v$ then $s_F(\psi^1(e))=\psi^0(v)$) and so $\widehat{Ux}(v)=\hat{x}(\psi^1(v))$. As $\hat{x}\in C_0(E^0)$ it follows immediately that $\widehat{Ux}\in C_0(F^0)$, i.e. $Ux\in X(F)$. Identical arguments show that $U^{-1}y:=y\circ(\psi^1)^{-1}$ is a map from $X(F)$ to $X(E)$ which is a two-sided inverse for $U$. Hence $U:X(E)\to X(F)$ is a bijection which is naturally linear.

Given $x\in X(E)$, $a\in C_0(E^0)$, and $e\in E^1$ we have
\begin{align*}
U(x\cdot a)&=(x(a\circ s_E))\circ\psi^1=(x\circ\psi^1)(a\circ s_E\circ\psi^1)=(Ux)(a\circ\psi^0\circ s_F)=Ux\cdot\alpha(a)\\
U(\phi_E(a)x)&=((a\circ r_E)x)\circ\psi^1=(a\circ r_E\circ\psi^1)(x\circ\psi^1)=(a\circ \psi^0\circ r_F)(Ux)=\phi_F(\alpha(a))Ux
\end{align*}
and, given $v\in F^0$,
\begin{align*}
\langle Ux,y\rangle(v)&=\sum_{e\in s_F^{-1}(v)}\overline{Ux(e)}y(e)=\sum_{e\in s_F^{-1}(v)}\!\!\overline{x(\psi^1(e))}y(e)=\!\!\!\sum_{f\in s_E^{-1}(\psi^0(v))}\hspace{-.5cm}\overline{x(f)}y((\psi^1)^{-1}(f))\\
&=\sum_{f\in s_E^{-1}(\psi^0(v))}\overline{x(f)}U^{-1}y(f)=\langle x,U^{-1}y\rangle(\psi^0(v))= \alpha(\langle x,U^{-1}y\rangle)(v)
\end{align*}
(the first inner product is that of $X(F)$ and the later two are that of $X(E)$). Thus the pair of $U$ and $\alpha$ satisfies the definition of a coherent unitary equivalence.
\end{proof}

Not every coherent unitary equivalence comes from a graph isomorphism in the sense of the preceding Proposition. As a simple example, consider the graph $E$ with but a single vertex and two edges. In this case $C_0(E^0)=\C$ and $X(E)=\C^2$. Hence any unitary $U\in M_2(\C)$ forms (with the identify on $C_0(E^0)$) a coherent unitary equivalence. However, the only such equivalences arising from graph isomorphisms would be those of the two permutation matrices.

\begin{proposition}\label{isotoeplitz} If there is a coherent unitary equivalence between $X$ and $Y$ then $\T_X$ and $\T_Y$ are $*$-isomorphic. 
\end{proposition}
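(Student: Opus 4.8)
The plan is to manufacture mutually inverse $*$-homomorphisms directly from the universal property of the Toeplitz algebras. Write $(U,\alpha)$ for the coherent unitary equivalence between $X$ (over $A$) and $Y$ (over $B$), and let $(\sigma_u^X,\pi_u^X)$ and $(\sigma_u^Y,\pi_u^Y)$ denote the universal Toeplitz representations in $\T_X$ and $\T_Y$. First I would pull the universal representation of $Y$ back along $(U,\alpha)$ to obtain a candidate Toeplitz representation of $X$ inside $\T_Y$, namely $\sigma:=\sigma_u^Y\circ U$ and $\pi:=\pi_u^Y\circ\alpha$. The point is that the three axioms of Definition \ref{toeplitzrepdef} for $(\sigma,\pi)$ follow by composing the corresponding axioms for $(\sigma_u^Y,\pi_u^Y)$ with the three defining identities of the coherent unitary equivalence. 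For instance, axiom (iii) reads $\sigma(x)^*\sigma(y)=\pi_u^Y(\langle Ux,Uy\rangle_B)=\pi_u^Y(\alpha(\langle x,y\rangle_A))=\pi(\langle x,y\rangle_A)$, where the middle equality is property (iii) of the equivalence applied with $U^{-1}(Uy)=y$; axioms (i) and (ii) are analogous and routine. The universal property of $\T_X$ then delivers a $*$-homomorphism $\Phi:=\rho_{\sigma,\pi}:\T_X\to\T_Y$ satisfying $\Phi\circ\sigma_u^X=\sigma_u^Y\circ U$ and $\Phi\circ\pi_u^X=\pi_u^Y\circ\alpha$.

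Since coherent unitary equivalence is an equivalence relation, $(U^{-1},\alpha^{-1})$ is a coherent unitary equivalence between $Y$ and $X$, and running the identical construction in the other direction produces a $*$-homomorphism $\Psi:\T_Y\to\T_X$ with $\Psi\circ\sigma_u^Y=\sigma_u^X\circ U^{-1}$ and $\Psi\circ\pi_u^Y=\pi_u^X\circ\alpha^{-1}$. It then remains to verify that $\Phi$ and $\Psi$ are mutual inverses. Here I would invoke the uniqueness clause of the universal property rather than argue on generators by hand: the composite $\Psi\circ\Phi$ is a $*$-endomorphism of $\T_X$ with $\Psi\circ\Phi\circ\sigma_u^X=\Psi\circ\sigma_u^Y\circ U=\sigma_u^X\circ U^{-1}\circ U=\sigma_u^X$ and, identically, $\Psi\circ\Phi\circ\pi_u^X=\pi_u^X$. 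Thus both $\Psi\circ\Phi$ and $\mathrm{id}_{\T_X}$ are $*$-homomorphisms out of $\T_X$ inducing the universal representation $(\sigma_u^X,\pi_u^X)$, so uniqueness forces $\Psi\circ\Phi=\mathrm{id}_{\T_X}$. The symmetric computation yields $\Phi\circ\Psi=\mathrm{id}_{\T_Y}$, whence $\Phi$ is the desired $*$-isomorphism.

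I expect no deep obstacle: the axiom checks for $(\sigma,\pi)$ are mechanical once the defining identities are lined up. The only place demanding care is the bookkeeping of which half of the equivalence ($U$ versus $U^{-1}$, $\alpha$ versus $\alpha^{-1}$) is used where, so that the composites telescope correctly to the identity; and the conceptual crux is that one must appeal to the \emph{uniqueness} part of the universal property, not merely its existence part, to conclude that the round-trip maps are the identity.
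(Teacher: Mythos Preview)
Your proposal is correct and follows essentially the same argument as the paper: compose the universal representation of $Y$ with $(U,\alpha)$ to obtain a Toeplitz representation of $X$ in $\T_Y$, invoke the universal property to get $\Phi$, do the symmetric construction for $\Psi$, and then use the uniqueness clause to identify the composites with the identities. The paper's proof differs only cosmetically, writing $\theta,\theta'$ for your $\Phi,\Psi$ and verifying the reverse Toeplitz representation directly rather than by citing that $(U^{-1},\alpha^{-1})$ is again a coherent unitary equivalence.
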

\begin{proof} Let $A$ and $B$ be the coefficient $C^*$-algebras for $X$ and $Y$, respectively.
Suppose that $(U,\alpha)$ is a coherent unitary equivalence between $X$ and $Y$ and let $(\sigma,\pi)$ be a Toeplitz representation of $Y$. For $x\in X$ and $a\in A$
\begin{align*}
\sigma(U(x\cdot a))&=\sigma(Ux\alpha(a))=\sigma(Ux)\pi(\alpha(a))\\
\sigma(U(\phi_X(a)x))&=\sigma(\alpha(a)Ux)=\pi(\alpha(a))\sigma(Ux)
\end{align*}
and for $x_1,x_2\in X$
\begin{equation*}\pi\circ\alpha(\langle x_1,x_2\rangle_A)=\pi(\langle Ux_1,Ux_2\rangle_B)=\sigma(Ux_1)^*\sigma(Ux_2).\end{equation*}
Hence $(\sigma\circ U,\pi\circ \alpha)$ is a Toeplitz representation of $X$.

In particular, $(\sigma_Y\circ U,\pi_B\circ\alpha)$ is a Toeplitz representation of $X$ where $(\sigma_Y,\pi_B)$ is the universal Toeplitz representation of $Y$ in $\T_Y$. By the universal property of $\T_X$, there is a $*$-homomorphism $\theta:\T_X\to\T_Y$ such that $\theta\circ\sigma_X=\sigma_Y\circ U$ and $\theta\circ\pi_A=\pi_B\circ \alpha$, where $(\sigma_X,\pi_A)$ is the universal representation of $X$ in $\T_X$.

Similarly $(\sigma_X\circ U^{-1},\pi_A\circ \alpha^{-1})$ is a Toeplitz representation of $Y$ and induces a $*$-homomorphism $\theta':\T_Y\to \T_X$ for which $\theta'\circ\sigma_Y=\sigma_X\circ U^{-1}$ and $\theta'\circ\pi_B=\pi_A\circ \alpha^{-1}$. Thus
$$\sigma_Y=\sigma_Y\circ U\circ U^{-1}=\theta\circ\sigma_X\circ U^{-1}=\theta\circ\theta'\circ\sigma_Y$$
and similarly $\pi_B=\theta\circ\theta'\circ\pi_B$. Since the identity $id$ on $\T_Y$ also has the property that $\pi_B=id\circ\pi_B$ and $\sigma_Y=id\circ\sigma_Y$, it follows by the universal property of $\T_Y$ that $\theta\circ\theta'=id$. Identical reasoning verifies that $\theta'\circ\theta$ is the identity on $\T_X$. Thus $\theta$ is our desired $*$-isomorphism.
\end{proof}

Going forward we will be exclusively interested in Toeplitz algebras associated to graph correspondences, and so offer the following corollary.
\begin{corollary}\label{autosfromcoherent} Let $E$ and $F$ be graphs. If $(U,\alpha)$ is a coherent unitary equivalence between $X(E)$ and $X(F)$ then there is a $*$-isomorphism $\Gamma_{U,\alpha}:\T_{X(E)}\to\T_{X(F)}$ for which $\Gamma_{U,\alpha}(\delta_e)=U\delta_{e}$ and $\Gamma_{U,\alpha}(\delta_v)=\alpha(\delta_v)$ for all $e\in E^1$ and $v\in E^0$.
\end{corollary}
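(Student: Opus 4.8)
The plan is to obtain $\Gamma_{U,\alpha}$ by specializing Proposition \ref{isotoeplitz} to the graph correspondences $X=X(E)$ and $Y=X(F)$; essentially nothing new needs to be proved, and the substance of the corollary is really a matter of unwinding the abuse of notation fixed in Section 1.

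First I would invoke Proposition \ref{isotoeplitz} directly. Its proof does not merely assert that $\T_{X(E)}$ and $\T_{X(F)}$ are $*$-isomorphic: it constructs a specific $*$-isomorphism $\theta:\T_{X(E)}\to\T_{X(F)}$ determined by the intertwining relations $\theta\circ\sigma_X=\sigma_Y\circ U$ and $\theta\circ\pi_A=\pi_B\circ\alpha$, where $(\sigma_X,\pi_A)$ and $(\sigma_Y,\pi_B)$ are the universal Toeplitz representations of $X(E)$ and $X(F)$ and $A=C_0(E^0)$, $B=C_0(F^0)$ are the respective coefficient algebras. I would simply declare $\Gamma_{U,\alpha}:=\theta$; since $\theta$ is pinned down by $(U,\alpha)$ through universality, this is a well-defined assignment.

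Next I would evaluate the two intertwining relations on the generators of the graph correspondences. Applying $\theta\circ\sigma_X=\sigma_Y\circ U$ to $\delta_e$ gives $\theta(\sigma_X(\delta_e))=\sigma_Y(U\delta_e)$, and applying $\theta\circ\pi_A=\pi_B\circ\alpha$ to $\delta_v$ gives $\theta(\pi_A(\delta_v))=\pi_B(\alpha(\delta_v))$. Under the convention adopted for graph correspondences — whereby an element of $X(E)$ or $C_0(E^0)$ is identified with its image under the universal maps $\sigma_u,\pi_u$ — the left-hand sides are written simply as $\Gamma_{U,\alpha}(\delta_e)$ and $\Gamma_{U,\alpha}(\delta_v)$, while the right-hand sides are written as $U\delta_e$ and $\alpha(\delta_v)$ in $\T_{X(F)}$. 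This produces exactly the two asserted formulas.

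The only point requiring any attention — and it is not really an obstacle — is to confirm that the notational identifications are the intended ones, so that the symbols $U\delta_e$ and $\alpha(\delta_v)$ appearing in the statement genuinely denote $\sigma_Y(U\delta_e)$ and $\pi_B(\alpha(\delta_v))$ as elements of $\T_{X(F)}$. Once this identification is acknowledged, the corollary follows immediately from the explicit construction carried out in Proposition \ref{isotoeplitz}.
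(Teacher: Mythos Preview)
Your proposal is correct and matches the paper's approach exactly: the paper simply states that the corollary ``is immediately seen from the proof of the previous Proposition if we recall that we identify $X(E)$ and $X(F)$ with their images in $\T_{X(E)}$ and $\T_{X(F)}$, respectively, under the appropriate universal maps.'' Your argument unwinds precisely this identification applied to the intertwining relations $\theta\circ\sigma_X=\sigma_Y\circ U$ and $\theta\circ\pi_A=\pi_B\circ\alpha$ from the proof of Proposition~\ref{isotoeplitz}.
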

This is immediately seen from the proof of the previous Proposition if we recall that we identify $X(E)$ and $X(F)$ with their images in $\T_{X(E)}$ and $\T_{X(F)}$, respectively, under the appropriate universal maps. This also implies that if $E$ and $F$ are isomorphic graphs then $\T_{X(E)}$ and $\T_{X(F)}$ are $*$-isomorphic, which is unsurprising.

\section{ENDOMORPHISMS FROM GRAPHS}

Throughout this section we will let $E=(E^0,E^1,r,s)$ be a given graph whose vertex and edge sets are at most countable. All $*$-representations will be assumed non-degenerate.

\begin{proposition} Given a $*$-representation $\tau:\T_{X(E)}\to B(H)$, the assignments
\begin{equation*}Ad_\tau(w):=\sum_{e\in E^1}\tau(\delta_e)w\tau(\delta_e)^*\end{equation*}
(the sum is taken as a SOT limit) define a $*$-endomorphism $Ad_\tau$ of the von Neumann algebra $W=\{\tau(\delta_v):v\in E^0\}'$ (this notation will denote the relative commutant in $B(H)$).
\end{proposition}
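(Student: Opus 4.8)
The plan is to write $v_e:=\tau(\delta_e)$ and $p_v:=\tau(\delta_v)$ and to first record the algebraic relations these operators satisfy. Since $\pi_u$ is a $*$-homomorphism and $\delta_v\delta_{v'}=\delta_{v,v'}\delta_v$ in $C_0(E^0)$, the $p_v$ are mutually orthogonal projections. From relation (iii) of a Toeplitz representation together with $\langle\delta_e,\delta_f\rangle=\delta_{e,f}\delta_{s(e)}$, I obtain $v_e^*v_f=\delta_{e,f}\,p_{s(e)}$; in particular each $v_e$ is a partial isometry with $v_e^*v_e=p_{s(e)}$, its range projection $q_e:=v_ev_e^*$ satisfies $q_e\le p_{r(e)}$, and for $e\ne f$ one has $q_eq_f=v_e(v_e^*v_f)v_f^*=0$, so the $\{q_e\}$ are mutually orthogonal. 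These four facts (the $p_v$ are orthogonal projections, $v_e=v_ep_{s(e)}=p_{r(e)}v_e$, $v_e^*v_f=\delta_{e,f}p_{s(e)}$, and the $\{q_e\}$ are orthogonal) are the engine of the whole argument.

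Next I would establish convergence and boundedness. Fix $w\in W$ and $\xi\in H$, and for a finite $F\subseteq E^1$ set $S_F:=\sum_{e\in F}v_ewv_e^*$. Because $v_ewv_e^*=q_e(v_ewv_e^*)$, the vectors $v_ewv_e^*\xi$ lie in the mutually orthogonal ranges of the $q_e$, so the Pythagorean identity gives $\|S_F\xi\|^2=\sum_{e\in F}\|v_ewv_e^*\xi\|^2\le\|w\|^2\sum_{e\in F}\|v_e^*\xi\|^2=\|w\|^2\sum_{e\in F}\|q_e\xi\|^2\le\|w\|^2\|\xi\|^2$, using $\|v_e^*\xi\|^2=\langle q_e\xi,\xi\rangle$ and $\sum_e\|q_e\xi\|^2\le\|\xi\|^2$. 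The same identity applied over $F'\setminus F$ shows $(S_F\xi)_F$ is a Cauchy net, so the SOT limit $Ad_\tau(w)$ exists and satisfies $\|Ad_\tau(w)\|\le\|w\|$. (Note this convergence requires only $w\in B(H)$.)

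Then I would check the $*$-homomorphism properties. Linearity is immediate. For the involution I would pass to matrix coefficients: $\langle Ad_\tau(w)\xi,\eta\rangle=\sum_e\langle wv_e^*\xi,v_e^*\eta\rangle$, from which $\langle Ad_\tau(w^*)\xi,\eta\rangle=\langle\xi,Ad_\tau(w)\eta\rangle$ follows directly, giving $Ad_\tau(w^*)=Ad_\tau(w)^*$. That $Ad_\tau$ lands in $W$ follows from $p_{v'}v_e=\delta_{v',r(e)}v_e$ (since $q_e\le p_{r(e)}$ and the $p_v$ are orthogonal): both $p_{v'}Ad_\tau(w)$ and $Ad_\tau(w)p_{v'}$ collapse to $\sum_{r(e)=v'}v_ewv_e^*$, so $Ad_\tau(w)$ commutes with every $p_{v'}$. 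For multiplicativity I compute the product of the two sums; using joint SOT-continuity of multiplication on bounded sets (valid here by the uniform bound from the previous step) together with $v_e^*v_f=\delta_{e,f}p_{s(e)}$, the double sum collapses to $\sum_e v_ew_1p_{s(e)}w_2v_e^*$, and then $w_1p_{s(e)}=p_{s(e)}w_1$ (this is the only place $w_1\in W$ is used) combined with $v_ep_{s(e)}=v_e$ yields $\sum_e v_e(w_1w_2)v_e^*=Ad_\tau(w_1w_2)$.

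The main obstacle is the careful handling of the infinite SOT sums, especially justifying the collapse of the double sum in the multiplicativity step: one must know the partial sums are uniformly bounded (supplied by the Pythagorean estimate) so that products of SOT-convergent nets converge to the product of the limits, and that the off-diagonal terms vanish identically through $v_e^*v_f=\delta_{e,f}p_{s(e)}$. Once the analytic bookkeeping of the limits is in place, the purely algebraic collapse is routine.
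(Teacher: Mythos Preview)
Your proof is correct and follows essentially the same route as the paper: establish the Toeplitz relations among the $v_e$ and $p_v$, use mutual orthogonality of the range projections $q_e$ to get SOT convergence, collapse the double sum for multiplicativity via $v_e^*v_f=\delta_{e,f}p_{s(e)}$ and the commutation $w_1p_{s(e)}=p_{s(e)}w_1$, and verify $Ad_\tau(w)\in W$ by checking $p_{v'}Ad_\tau(w)=Ad_\tau(w)p_{v'}$ reduces both sides to the sum over $r^{-1}(v')$. Your treatment of the SOT limits (the Pythagorean estimate giving the uniform bound $\|Ad_\tau(w)\|\le\|w\|$ and the explicit appeal to joint SOT-continuity on bounded sets) is in fact more careful than the paper's, which argues convergence somewhat loosely.
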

\begin{proof} First, notice that for $e\in E^1$ and $w\in W$ the term $\tau(\delta_e)w\tau(\delta_e)^*$ has its support projection contained in $\tau(\delta_e^*\delta_e)$. Since the partial isometries $\tau(\delta_e)$ have mutually orthogonal ranges, it follows that for every $h\in H$, $\tau(\delta_e)w\tau(\delta_e)^*h$ is nonzero for at most one $e\in E^1$. Thus the sum converges in the SOT.

Certainly $Ad_\tau$ is linear and has $Ad_\tau(w^*)=Ad_\tau(w)^*$ for each $w\in W$. Given $w_1,w_2\in W$ we find that
\begin{align*}Ad_\tau(w_1)Ad_\tau(w_2)&=\bigg(\sum_{e\in E^1}\tau(\delta_e)w_1\tau(\delta_e)^*\bigg)\bigg(\sum_{f\in E^1}\tau(\delta_f)w_2\tau(\delta_f)^*\bigg)\\
	&= \sum_{e,f\in E^1}\tau(\delta_e)w_1\tau(\delta_e)^*\tau(\delta_f)w_2\tau(\delta_f)^*\\
	&= \sum_{e\in E^1}\tau(\delta_e)w_1\tau(\delta_{s(e)})w_2\tau(\delta_e)^*\\
	&=\sum_{e\in E^1}\tau(\delta_e)\tau(\delta_{s(e)})w_1w_2\tau(\delta_e)^*\\
	&=\sum_{e\in E^1}\tau(\delta_e)w_1w_2\tau(\delta_e)^*\\
	&=Ad_\tau(w_1w_2)
	\end{align*}
and so $Ad_\tau$ is multiplicative. Note that any potential issues with SOT-convergence of the product are circumvented by $E^1$ being at most countable. All that remains is to verify that $Ad_\tau(w)\in W$ for each $w\in W$. To that end we first note that $\delta_e^*\delta_v=\delta_e^*$ if $v=r(e)$ and is zero otherwise. By taking adjoints, $\delta_v\delta_e=\delta_e$ if $v=r(e)$ and is zero otherwise. Thus, given $w\in W$ and $v\in E^0$ we find
\begin{equation*}Ad_\tau(w)\tau(\delta_v)=\sum_{e\in r^{-1}(v)}\tau(\delta_e)w\tau(\delta_e)^*=\tau(\delta_v)Ad_\tau(w)\end{equation*}
and so $Ad_\tau(w)$ commutes with each $\tau(\delta_v)$.
\end{proof}

We note that the von Neumann algebra $W=\{\tau(\delta_v):v\in E^0\}$, because the $\tau(\delta_v)$ are a family of mutually orthogonal projections, is precisely equal to $\displaystyle{\bigoplus_{v\in E^0} \tau(\delta_v)B(H)\tau(\delta_v)}$ which is a sum of Type I factors. 

The following is a construction which we believe to be folklore, but our use of it is motivated by observations made by Muhly and Solel \cite{muhlysolel}. Given a $*$-representation $\tau:\T_{X(E)}\to B(H)$ let $W=\{\tau(\delta_v):v\in E^0\}'$. The space
\begin{equation*}\mathcal{I}_\tau:=\left\{T\in B(H):Ad_\tau(w)T=Tw\ ,\ w\in W \right\}\end{equation*}
is a $C^*$-correspondence over $W'$. The left and right actions of $W'$ are simply multiplication within $B(H)$ and the $W'$-valued inner product is defined by $\langle T,S\rangle_{W'}:=T^*S$.

Because our endomorphism is of the form $Ad_\tau$,  we can say more: for $w\in W$ and $e\in E^1$
\begin{equation*}Ad_\tau(w)\tau(\delta_e)=\sum_{f\in E^1}\!\!\tau(\delta_f)w\tau(\delta_f)^*\tau(\delta_e)=\tau(\delta_e)w\tau(\delta_{s(e)})=\tau(\delta_e)\tau(\delta_{s(e)})w=\tau(\delta_e)w\end{equation*}
and so $\tau(\delta_e)\in\mathcal{I}_\tau$ for each $e\in E^1$. As $\tau(\delta_v)\in W'$ for each $v\in E^0$ we finally have $\tau(X(E))\subseteq \mathcal{I}_\tau$.

\begin{theorem}\label{theorem1} Suppose that $E$ and $F$ are graphs and $\tau_1:\T_{X(E)}\to B(H)$ and $\tau_2:\T_{X(F)}\to B(H)$ are two faithful $*$-representations. If $Ad_{\tau_1}=Ad_{\tau_2}$ on $W=\{\tau_1(\delta_v):v\in E^0\}'=\{\tau_2(\delta_v):v\in F^0\}'$ then there is a coherent unitary equivalence $(U,\alpha)$ between $X(E)$ and $X(F)$ such that $\tau_2=\tau_1\circ\Gamma_{U,\alpha}$. 
\end{theorem}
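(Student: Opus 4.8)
The plan is to observe that the $C^*$-correspondence $\I_\tau$ constructed just before the theorem depends on $\tau$ only through the von Neumann algebra $W$ and the endomorphism $Ad_\tau$. Since by hypothesis $\tau_1$ and $\tau_2$ determine the same $W$ and satisfy $Ad_{\tau_1}=Ad_{\tau_2}$ on $W$, the two correspondences coincide: $\I_{\tau_1}=\I_{\tau_2}=:\I$, a single correspondence over $W'$. As was shown in the discussion preceding the theorem, both $\tau_1(X(E))$ and $\tau_2(X(F))$ are contained in $\I$. My aim is to prove that these two images are in fact equal as subsets of $\I$; once this is known, $\alpha:=\tau_1^{-1}\circ\tau_2$ on $C_0(F^0)$ and $U:=\tau_1^{-1}\circ\tau_2$ on $X(F)$ will be well-defined (using that $\tau_1,\tau_2$ are faithful, hence isometric and injective on the relevant subspaces), and transporting the defining identities of a Toeplitz representation through $\tau_1$ and $\tau_2$ will show $(U,\alpha)$ is a coherent unitary equivalence from $X(F)$ to $X(E)$.

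First I would match the vertices. Taking commutants of the hypothesis $\{\tau_1(\delta_v)\}'=W=\{\tau_2(\delta_w)\}'$ gives $\{\tau_1(\delta_v)\}''=W'=\{\tau_2(\delta_w)\}''$. Non-degeneracy forces $\sum_{v}\tau_1(\delta_v)=1$ (every generator of $\T_{X(E)}$ has source and range projections dominated by $\sup_v\tau_1(\delta_v)$), so $W'$ is the abelian, purely atomic von Neumann algebra generated by the mutually orthogonal projections $\{\tau_1(\delta_v)\}$, isomorphic to $\ell^\infty(E^0)$. The minimal projections of $W'$ are intrinsic to $W'$, yet they are computed as $\{\tau_1(\delta_v):v\in E^0\}$ from one side and $\{\tau_2(\delta_w):w\in F^0\}$ from the other. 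Hence these two sets coincide, giving a bijection $\psi^0:E^0\to F^0$ with $p_v:=\tau_1(\delta_v)=\tau_2(\delta_{\psi^0(v)})$ for each $v$; this is exactly the statement that $\alpha=\tau_1^{-1}\circ\tau_2$ restricts to a $*$-isomorphism $C_0(F^0)\to C_0(E^0)$.

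The heart of the argument, and the step I expect to be the main obstacle, is identifying the edge parts, because $\I$ is a module over all of $W'\cong\ell^\infty(E^0)$ whereas $X(E)$ lives over the much smaller $C_0(E^0)\cong c_0(E^0)$; one must not confuse $\tau_1(X(E))$ with the whole of $\I$. The resolution is to work one minimal projection at a time. Since $1\in W$, the defining relation of $\I$ applied to $w=1$ shows that every $T\in\I$ satisfies $Ad_{\tau_1}(1)\,T=T$, and a short computation using $Ad_{\tau_1}(w)\tau_1(\delta_e)=\tau_1(\delta_e)w$ gives $T=\sum_e\tau_1(\delta_e)\big(\tau_1(\delta_e)^*T\big)$ with each coefficient $\tau_1(\delta_e)^*T$ lying in $W'$. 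Cutting on the right by $p_v$ and using that $p_v$ is minimal, so $W'p_v=\C p_v$, collapses every coefficient to a scalar and yields $\I p_v=\overline{\operatorname{span}}\{\tau_1(\delta_e):s(e)=v\}=\tau_1(X(E))p_v$. The same computation performed with $\tau_2$ gives $\I p_v=\tau_2(X(F))p_{\psi^0(v)}$, so the two fibers agree; summing over the orthogonal family $\{p_v\}$ (whose sum is $1$) then gives $\tau_1(X(E))=\tau_2(X(F))$.

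With the images identified, $U=\tau_1^{-1}\circ\tau_2$ is a linear bijection $X(F)\to X(E)$. It remains only to verify axioms (i)--(iii) of a coherent unitary equivalence, which I would do by applying $\tau_2$ to each proposed identity and rewriting it through the Toeplitz-representation axioms for $(\tau_1|_{X(E)},\tau_1|_{C_0(E^0)})$ and $(\tau_2|_{X(F)},\tau_2|_{C_0(F^0)})$; injectivity of $\tau_1$ then removes $\tau_1$. For instance, $\tau_1\big(U(x\cdot a)\big)=\tau_2(x\cdot a)=\tau_2(x)\tau_2(a)=\tau_1(Ux)\tau_1(\alpha(a))=\tau_1\big(Ux\cdot\alpha(a)\big)$ gives (i), and (ii), (iii) are analogous. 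Finally, Corollary \ref{autosfromcoherent} (applied with the roles of $E$ and $F$ interchanged, which is legitimate since coherent unitary equivalence is symmetric) supplies the $*$-isomorphism $\Gamma_{U,\alpha}:\T_{X(F)}\to\T_{X(E)}$ with $\Gamma_{U,\alpha}(\delta_f)=U\delta_f$ and $\Gamma_{U,\alpha}(\delta_w)=\alpha(\delta_w)$; checking $\tau_1\circ\Gamma_{U,\alpha}=\tau_2$ on the generators $\delta_f$ and $\delta_w$ (where both sides equal $\tau_2(\delta_f)$ and $\tau_2(\delta_w)$ by construction) and invoking that a representation is determined by its values on generators completes the proof.
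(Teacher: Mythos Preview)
Your proposal is correct and follows essentially the same route as the paper: identify $\I_{\tau_1}=\I_{\tau_2}$, show that the two sets of vertex projections coincide (the paper states this directly from equality of commutants, you phrase it via minimal projections of $W'$), prove $\tau_1(X(E))=\tau_2(X(F))$ as subsets of $\I$, and then transport the Toeplitz axioms through $\tau_1,\tau_2$ to obtain the coherent unitary equivalence. The only cosmetic differences are that the paper expands $\tau_1(\delta_e)=\sum_f\tau_2(\delta_f)\big(\tau_2(\delta_f)^*\tau_1(\delta_e)\big)$ and observes each coefficient is a scalar multiple of a single $P_v$, whereas you localize by right-multiplying by $p_v$ and use $W'p_v=\C p_v$ to reach the same conclusion fibre-by-fibre; and you orient $(U,\alpha)$ as $\tau_1^{-1}\circ\tau_2$ while the paper uses $\tau_2^{-1}\circ\tau_1$, which you correctly note is harmless by symmetry of coherent unitary equivalence.
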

Here $\Gamma_{U,\alpha}$ is the $*$-isomorphism from $\T_{X(E)}$ to $\T_{X(F)}$ arising from $(U,\alpha)$ as given in Corollary \ref{autosfromcoherent}.
\begin{proof} 
Since $\{\tau_1(\delta_v):v\in E^0\}$ and $\{\tau_2(\delta_v):v\in F^0\}$ are sets of orthogonal projections with the same commutant they are in fact equal, and in particular $E^0$ and $F^0$ are of the same cardinality. To ease notation we'll denote these projections by $P_v$, $v\in E^0$, (with no assumption that $P_v=\tau_1(\delta_v)$ or similar) hence
\begin{equation*}\{P_v:v\in E^0\}=\{\tau_1(\delta_v):v\in E^0\}=\{\tau_2(\delta_v):v\in F^0\}.\end{equation*}

As $Ad_{\tau_1}=Ad_{\tau_2}$ we have that $\mathcal{I}_{\tau_1}=\mathcal{I}_{\tau_2}$ and we'll call this module simply $\mathcal{I}$.

As $\tau_1(\delta_e)\in\I$ for each $e\in E^1$ we have
\begin{equation*}\tau_1(\delta_e)=\tau_1(\delta_e)I=Ad_{\tau_2}(I)\tau_1(\delta_e)=\sum_{f\in F^1}\tau_2(\delta_f)\tau_2(\delta_f)^*\tau_1(\delta_e)\end{equation*}
hence $\tau_1(\delta_e)$ is in the $W'$-submodule of $\I$ generated by $\tau_2(X(E))$. Similarly, for each $f\in F^1$, $\tau_2(\delta_f)$ is in the $W'$-submodule generated by $\tau_1(X(E))$. Thus $\tau_1(X(E))$ and $\tau_2(X(E))$ generate the same $W'$-submodule of $\I$.

Given $e\in E^1$ and $f\in F^1$ we have seen that 
\begin{equation*}\tau_2(\delta_f)^*\tau_1(\delta_e)\in W'=\{P_v:v\in E^0\}''=\ell^\infty(\{P_v:v\in E^0\}).\end{equation*}
Notice however that $\tau_2(\delta_f)^*\tau_1(\delta_e)\tau_1(\delta_v)=0$ unless $v=s(e)$ and hence $\tau_2(\delta_f)^*\tau_1(\delta_e)$ is a multiple of $\tau_1(\delta_{s(e)})$ only, i.e. is an element of $C_0(\{P_v:v\in E^0\})$. Since before we obtained $\tau_1(\delta_e)=\sum_{f\in F^1}\tau_2(\delta_f)\tau_2(\delta_f)^*\tau_1(\delta_e)$ for all $e\in E^1$, it now follows that $\tau_1(X(E))$ and $\tau_2(X(E))$ generate the same correspondence over $C_0(\{P_v:v\in E^0\})$. It is important to note that this correspondence has three different actions of $C_0(\{P_v:v\in E^0\})$: the ones inherited through $\tau_1$ and $\tau_2$ and simple operator multiplication in $B(H)$.
 
Finally we have that $\tau_1(C_0(E^0))=\tau_2(C_0(F^0))$ and $\tau_1(X(E))=\tau_2(X(F))$ as sets and so, because both representations are faithful by hypothesis, $\tau_2^{-1}\circ\tau_1$ is a well-defined bijection between $X(E)$ and $X(F)$ and between $C_0(E^0)$ and $C_0(F^0)$. Denote by $U$ and $\alpha$ the restrictions of $\tau_2^{-1}\circ\tau_1$ to $X(E)$ and to $C_0(E^0)$, respectively.

Given $x\in X(E)$, $y\in X(F)$, and $a\in C_0(E^0)$ we have
\begin{align*}U(xa)&=\tau_2^{-1}\circ\tau_1(xa)=\tau_2^{-1}\circ\tau_1(x)\tau_2^{-1}\circ\tau_1(a)=(Ux)\alpha(a),\\
U(\phi(a)x)&=\tau_2^{-1}\circ\tau_1(\phi(a)x)=\tau_2^{-1}\circ\tau_1(a)\tau_2^{-1}\circ\tau_1(x)=\alpha(a)Ux,\\
\langle Ux,y\rangle&=[\tau_2^{-1}\circ\tau_1(x)]^*y=\tau_2^{-1}\circ\tau_1(x^*\tau_1^{-1}\circ\tau_2(y))=\alpha(\langle x,\tau_1^{-1}\circ\tau_2(y)\rangle)=\alpha(\langle x,U^{-1}y\rangle).\end{align*}
and so $(U,\alpha)$ is a coherent unitary equivalence between $X(E)$ and itself.

It follows from Corollary \ref{autosfromcoherent} that $(U,\alpha)$ induces a $*$-isomorphism $\Gamma_{U,\alpha}:\T_{X(E)}\to\T_{X(F)}$ and, by construction, $\tau_2\circ \Gamma_{U,\alpha}=\tau_1$.
\end{proof}

Notice we have shown that if $\tau_1:\T_{X(E)}\to B(H)$ and $\tau_2:\T_{X(F)}\to B(H)$ generate identical $*$-endomorphims $Ad_{\tau_1}$ and $Ad_{\tau_2}$ then $\T_{X(E)}$ and $\T_{X(F)}$ are $*$-isomorphic as $C^*$-algebras. Hence in subsequent results we will concern ourselves only with two representations $\tau_1$, $\tau_2$ of the same Toeplitz algebra $\T_{X(E)}$.

Our result is a generalization of Laca's \cite[Proposition 2.2]{laca}. When $E$ is the graph with a single vertex and $n\in\N\cup\{ \infty\}$ edges we have already seen that $\T_{X(E)}=\E_n$. If $\tau_1$ and $\tau_2$ are faithful and nondegenerate then $W=B(H)$. The map $\alpha$ is the identity on $C_0(E^0)=\C$ and $U$ is a unitary operator on the Hilbert space $X(E)=\ell^2(\{v_1,...,v_n\})$. Hence $\Gamma_{U,\alpha}$ is a $*$-automorphism of $\E_n$ which fixes the Hilbert space $X(E)$ and implements the equivalence between the two representations.

We will conclude this section with a discussion of conjugacy conditions for endomorphisms of the type we've been examining. Recall that two endomorphisms $\alpha$ and $\beta$ are said to be \emph{conjugate} if there is an automorphism $\gamma$ such that $\alpha\circ\gamma=\gamma\circ\beta$.

\begin{lemma}\label{spatialdiagonal} If $P_1,P_2,...\in B(H)$ is an at most countable family of orthogonal projections and $\gamma$ is a $*$-automorphism of $W=\{P_1,P_2,...\}'$ then there exists a unitary $U\in B(H)$ such that $\gamma(w)=UwU^*$ for all $w\in W$.
\end{lemma}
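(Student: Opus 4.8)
The plan is to reduce the statement to the spatiality of $*$-isomorphisms between type I factors. First I would record the structure of $W$. Writing $H_i := P_iH$ for $i\geq 1$ and $H_0 := (I-\sum_{i\geq 1}P_i)H$ (allowing $H_0=0$), mutual orthogonality of the $P_i$ gives an orthogonal decomposition $H=\bigoplus_{i\geq 0}H_i$, and an operator commutes with every $P_i$ precisely when it is block-diagonal for this decomposition. Hence $W=\{P_i\}'=\bigoplus_{i\geq 0}B(H_i)$ is a von Neumann direct sum of type I factors, with $P_i$ the central projection onto the $i$-th summand. The crucial structural point, which I would flag here, is that each factor $P_iWP_i$ equals the \emph{full} algebra $B(H_i)$—i.e. $W$ acts with multiplicity one on each $H_i$—exactly because the $P_i$ are mutually orthogonal and $W$ is their entire commutant.

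Next I would analyze the action of $\gamma$ on the center. Since $\gamma$ is a $*$-automorphism it preserves $Z(W)$, whose minimal projections are precisely the $P_i$ (each $P_iWP_i=B(H_i)$ is a factor). Thus $\gamma$ restricts to a bijection $\pi$ of the index set with $\gamma(P_i)=P_{\pi(i)}$, and restricting $\gamma$ to the $i$-th summand yields a $*$-isomorphism $\gamma_i:=\gamma|_{B(H_i)}\colon B(H_i)\to P_{\pi(i)}WP_{\pi(i)}=B(H_{\pi(i)})$, which is automatically normal since isomorphisms of von Neumann algebras are weak-$*$ continuous.

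The key step is the spatial implementation of each $\gamma_i$. Composing the identity representation of $B(H_{\pi(i)})$ on $H_{\pi(i)}$ with $\gamma_i$ produces a normal representation of $B(H_i)$ on $H_{\pi(i)}$ whose commutant is $\gamma_i(B(H_i))'=B(H_{\pi(i)})'=\C I$, hence irreducible; by the uniqueness up to unitary equivalence of the irreducible normal representation of a type I factor there is a unitary $U_i\colon H_i\to H_{\pi(i)}$ with $\gamma_i(T)=U_iTU_i^*$ for all $T\in B(H_i)$ (in particular $\dim H_i=\dim H_{\pi(i)}$, with no need to argue dimensions separately). I would then set $U:=\bigoplus_i U_i$. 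Because $\pi$ is a bijection, the family $\{H_{\pi(i)}\}$ is merely a reindexing of $\{H_j\}$, so $U$ carries the orthogonal decomposition of $H$ bijectively and isometrically onto itself and is therefore unitary. A block-by-block check then gives $UwU^*=\gamma(w)$ for every $w=\bigoplus_i w_i\in W$: on the summand $H_{\pi(i)}$ the left side acts as $U_iw_iU_i^*=\gamma_i(w_i)$, which is exactly the $\pi(i)$-component of $\gamma(w)$.

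The main obstacle is the spatiality fact used in the third step, together with the bookkeeping required to glue the local unitaries $U_i$ into a single unitary on all of $H$: one must ensure that $\pi$ is genuinely a bijection (so the target subspaces exhaust $H$, with the possible leftover block $H_0$ correctly accounted for) and that each $\gamma_i$ acts with multiplicity one, which is where mutual orthogonality of the $P_i$—forcing $P_iWP_i$ to be all of $B(H_i)$ rather than an amplification—is indispensable. Countability of the family guarantees that the direct sum defining $U$ presents no convergence difficulties.
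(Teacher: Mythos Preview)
Your proposal is correct and follows essentially the same approach as the paper: decompose $W$ into the type~I factors $B(P_iH)$, use that $\gamma$ permutes the minimal central projections, implement each restricted isomorphism $\gamma_i$ spatially by a unitary $U_i$, and take $U=\bigoplus_i U_i$. Your version is more explicit about the bookkeeping (the leftover block $H_0$, why $\gamma$ induces a bijection $\pi$, and why the direct sum is unitary), but the argument is the same.
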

\begin{proof} Note that for each $n$, $\gamma$ restricts to a $*$-isomorphism $\gamma_n$ between $P_nB(H)P_n=B(P_nH)$ and $\gamma(P_n)B(H)\gamma(P_n)=B(\gamma(P_n)H)$. Such isomorphisms are always spatial and so there are unitaries $U_n:B(P_nH)\to B(\gamma(P_n)H)$ such that $\gamma_n(w)=U_nwU_n^*$. It is then immediate that $U:=U_1\oplus U_2\oplus...$ is a unitary in $B(H)$ and $UwU^*=\gamma(w)$ for each $w\in W$.
\end{proof}

\begin{theorem}\label{theorem2} Suppose that $\tau_1,\tau_2:\T_{X(E)}\to B(H)$ are two faithful $*$-representations such that $Ad_{\tau_1}$ and $Ad_{\tau_2}$ are conjugate $*$-endomorphisms of $W=\{\tau_1(\delta_v):v\in E^0\}'=\{\tau_2(\delta_v):v\in E^0\}'$. Then there is a coherent unitary equivalence $(U,\alpha)$ between $X(E)$ and itself such that $\tau_2$ and $\tau_1\circ\Gamma_{U,\alpha}$ are unitarily equivalent $*$-representations.
\end{theorem}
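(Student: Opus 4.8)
The plan is to reduce the conjugate case to the equality case already settled in Theorem \ref{theorem1}, using Lemma \ref{spatialdiagonal} to spatially implement the conjugating automorphism. By the definition of conjugacy there is a $*$-automorphism $\gamma$ of $W$ with $Ad_{\tau_1}\circ\gamma=\gamma\circ Ad_{\tau_2}$, which I rewrite as $Ad_{\tau_1}=\gamma\circ Ad_{\tau_2}\circ\gamma^{-1}$. Since $W=\{\tau_1(\delta_v):v\in E^0\}'$ is the commutant of an at most countable family of orthogonal projections, Lemma \ref{spatialdiagonal} furnishes a unitary $V\in B(H)$ with $\gamma(w)=VwV^*$ for every $w\in W$; in particular $\gamma(W)=W$ forces $VWV^*=W$, so $V$ normalizes $W$ and $V^*wV=\gamma^{-1}(w)\in W$ for each $w\in W$.

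Next I would twist the second representation by $V$. Define $\tau_2':=Ad_V\circ\tau_2$, i.e.\ $\tau_2'(T)=V\tau_2(T)V^*$ for $T\in\T_{X(E)}$; this is again a faithful $*$-representation. Its diagonal has commutant $\{\tau_2'(\delta_v):v\in E^0\}'=V\{\tau_2(\delta_v):v\in E^0\}'V^*=VWV^*=W$, so $\tau_2'$ carries the same algebra $W$ as $\tau_1$ and $Ad_{\tau_2'}$ is defined there.

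The crux is the identity $Ad_{\tau_2'}=Ad_{\tau_1}$ on $W$. For $w\in W$ I would insert $\tau_2'(\delta_e)=V\tau_2(\delta_e)V^*$ into the SOT sum defining $Ad_{\tau_2'}(w)$ and factor the fixed unitary $V$ through the (SOT-continuous) left and right multiplications, obtaining
\[Ad_{\tau_2'}(w)=\sum_{e\in E^1}V\tau_2(\delta_e)(V^*wV)\tau_2(\delta_e)^*V^*=V\,Ad_{\tau_2}(\gamma^{-1}(w))\,V^*=\gamma\big(Ad_{\tau_2}(\gamma^{-1}(w))\big).\]
Hence $Ad_{\tau_2'}=\gamma\circ Ad_{\tau_2}\circ\gamma^{-1}$, which equals $Ad_{\tau_1}$ by the conjugacy relation above.

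Finally, $\tau_1$ and $\tau_2'$ are two faithful representations of $\T_{X(E)}$ sharing the diagonal algebra $W$ and satisfying $Ad_{\tau_1}=Ad_{\tau_2'}$, so Theorem \ref{theorem1} yields a coherent unitary equivalence $(U,\alpha)$ of $X(E)$ with itself such that $\tau_2'=\tau_1\circ\Gamma_{U,\alpha}$. Unwinding $\tau_2'=V\tau_2(\cdot)V^*$ gives $V\tau_2(\cdot)V^*=\tau_1\circ\Gamma_{U,\alpha}$, exhibiting $V$ as a unitary equivalence between $\tau_2$ and $\tau_1\circ\Gamma_{U,\alpha}$, which is the claim. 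I expect the main obstacle to be the bookkeeping of the twisting step: verifying that $\tau_2'$ retains the diagonal commutant $W$ (so that Theorem \ref{theorem1} applies) and that conjugation by $V$ passes cleanly through the SOT-convergent sum so that $Ad$ transforms by $\gamma$-conjugation. Once that is in place, Lemma \ref{spatialdiagonal} and Theorem \ref{theorem1} do the real work.
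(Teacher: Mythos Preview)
Your proposal is correct and follows essentially the same route as the paper: spatially implement the conjugating automorphism via Lemma~\ref{spatialdiagonal}, conjugate one of the two representations by the resulting unitary to reduce to the equality case, and then invoke Theorem~\ref{theorem1}. The only cosmetic difference is that the paper twists $\tau_1$ (defining $\kappa(t)=V\tau_1(t)V^*$ and showing $Ad_\kappa=Ad_{\tau_2}$) whereas you twist $\tau_2$; your version is slightly more explicit about verifying that the twisted diagonal still has commutant $W$, a check the paper leaves implicit.
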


\begin{proof} Let $\gamma$ be an $*$-automorphism of $W$ such that $Ad_{\tau_1}\circ\gamma=\gamma\circ Ad_{\tau_2}$ and let $V\in B(H)$ be the unitary for which $\gamma(w)=VwV^*$ according the Lemma \ref{spatialdiagonal}. Then $Ad_{\tau_2}(w)=V^*Ad_{\tau_1}(VwV^*)V$ for all $w\in W$. Define $\kappa:\T_{X(E)}\to B(H)$ by $\kappa(t):=V\tau_1(t)V^*$ and note that $\kappa$ is a $*$-representation of $\T_{X(E)}$ such that 
\begin{equation*}Ad_{\kappa}(w)=\sum_{e\in E^1}\kappa(\delta_e)w\kappa(\delta_e)^*=\sum_{e\in E^1}V\tau_1(\delta_e)V^*wV\tau_1(\delta_e)^*V^*=VAd_{\tau_1}(V^*wV)V^*\end{equation*}
and so $Ad_{\kappa}=Ad_{\tau_2}$ on $W$. Applying Theorem \ref{theorem1} we obtain a coherent unitary equivalence $(U,\alpha)$ inducing the $*$-automorphism $\Gamma_{U,\alpha}$ of $\T_{X(E)}$ such that $\tau_2=\kappa\circ\Gamma_{U,\alpha}$. As now $\tau_2(t)=V[\tau_1\circ\Gamma_{U,\alpha}(t)]V^*$ for each $t\in \T_{X(E)}$, we have that $\tau_2$ and $\tau_1\circ\Gamma_{U,\alpha}$ are unitarily equivalent, as desired.
\end{proof}

\section{GRAPHS FROM ENDOMORPHISMS}

In this final section we will demonstrate that all $*$-endomorphisms of von Neumann algebras which are sums of Type I factors are obtained in the natural way from representations of Toeplitz algebras for graph correspondences. Our result is a significant generalization of \cite[Theorem 3.9]{brenken2} which places technical restrictions on the endomorphisms. In the case of unital endomorphisms our results are comparable.

\begin{theorem}\label{theorem3} Let $W=\bigoplus W_i\subseteq B(H)$ be a countable sum of Type I factors. Let $P_1,P_2,...\in B(H)$ be projections such that $W_i=P_iB(H)P_i$. If $\alpha$ is a normal $*$-endomorphism of $W$ then there exists a graph $E$ and $*$-representation $\tau:\T_{X(E)}\to B(H)$ such that $\alpha=Ad_\tau$.
\end{theorem}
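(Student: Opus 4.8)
The plan is to read the graph directly off the way $\alpha$ redistributes the central summands of $W$ among one another, and then to manufacture the partial isometries $\tau(\delta_e)$ out of intertwiner spaces, exactly as the generating isometries are recovered in the classical $B(H)$ case. Write $K_i=P_iH$, so that $W_i=B(K_i)$ and the $P_i$ are precisely the central projections of $W$ (with $\sum_iP_i=1$). The crucial first observation is that, because each $P_j$ is central in $W$, the map $\beta_{ij}:W_i\to W_j$ defined by $\beta_{ij}(w):=\alpha(w)P_j$ is a genuine normal $*$-homomorphism between type I factors: multiplicativity follows from $\alpha(w_1w_2)P_j=\alpha(w_1)P_j\,\alpha(w_2)P_j$. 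Thus $\alpha$ is completely encoded by the countable family $\{\beta_{ij}\}$, with $\alpha(w)=\sum_j\beta_{ij}(w)$ whenever $w\in W_i$.

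Next I would invoke the structure theory of normal representations of a type I factor to extract partial isometries from each $\beta_{ij}$. For each pair $(i,j)$ consider the intertwiner space
\[\mathcal{M}_{ij}:=\{s\in B(H): s=P_jsP_i,\ \alpha(w)s=sw\ \text{for all } w\in W_i\}.\]
A short computation shows $s^*t\in\C P_i$ for $s,t\in\mathcal{M}_{ij}$, so $\mathcal{M}_{ij}$ is a Hilbert space under $\langle s,t\rangle P_i:=s^*t$; fix an orthonormal basis $\{s^{ij}_\lambda\}_{\lambda\in\Lambda_{ij}}$. Each $s^{ij}_\lambda$ is then a partial isometry with $s^{ij}_\lambda{}^*s^{ij}_\lambda=P_i$ and range projection dominated by $\beta_{ij}(P_i)=\alpha(P_i)P_j$, and the standard reconstruction identity $\beta_{ij}(w)=\sum_\lambda s^{ij}_\lambda\, w\, s^{ij}_\lambda{}^*$ holds (one checks $s^{ij}_\lambda{}^*\beta_{ij}(w)s^{ij}_\mu=\delta_{\lambda\mu}w$).

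I would then let $E^0$ be the index set of the factors and, for each $(i,j)$, attach one edge of source $i$ and range $j$ for every $\lambda\in\Lambda_{ij}$. Setting $\pi(\delta_i):=P_i$ and $\sigma(\delta_e):=s^{ij}_\lambda$ for the edge $e$ indexed by $(i,j,\lambda)$ should define a Toeplitz representation of $X(E)$. The relations $\sigma(\delta_e)\pi(\delta_v)=\delta_{v,s(e)}\sigma(\delta_e)$ and $\pi(\delta_v)\sigma(\delta_e)=\delta_{v,r(e)}\sigma(\delta_e)$ are immediate from $s^{ij}_\lambda=P_js^{ij}_\lambda P_i$, while relation (iii) amounts to $\sigma(\delta_e)^*\sigma(\delta_f)=\delta_{ef}P_{s(e)}$, i.e.\ to the mutual orthogonality of \emph{all} the range projections. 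This is the one point requiring a real idea, and it is supplied for free by multiplicativity of $\alpha$: ranges of edges with distinct targets $j\neq j'$ are separated by $P_jP_{j'}=0$; ranges of edges with a common target $j$ but distinct sources $i\neq i'$ are separated because $\alpha(P_i)\alpha(P_{i'})=\alpha(P_iP_{i'})=0$; and ranges of edges with the same $(i,j)$ are orthogonal by the orthonormality of the $s^{ij}_\lambda$. Universality of $\T_{X(E)}$ then yields $\tau$ with $\tau(\delta_e)=s^{ij}_\lambda$ and $\tau(\delta_v)=P_v$, and in particular $\{\tau(\delta_v):v\in E^0\}'=\{P_v\}'=W$.

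Finally I would verify $\alpha=Ad_\tau$ by direct computation: for $w\in W$ write $w=\sum_iw_i$ with $w_i=P_iwP_i\in W_i$, and use $s^{ij}_\lambda\, w\, s^{ij}_\lambda{}^*=s^{ij}_\lambda\, w_i\, s^{ij}_\lambda{}^*$ together with the reconstruction identity and $\sum_jP_j=1$ to obtain
\[Ad_\tau(w)=\sum_{i,j}\sum_{\lambda\in\Lambda_{ij}}s^{ij}_\lambda\, w_i\, s^{ij}_\lambda{}^*=\sum_{i,j}\beta_{ij}(w_i)=\sum_i\alpha(w_i)=\alpha(w).\]
The main obstacle I anticipate is not any single inequality but the bookkeeping needed to make the representation-theoretic extraction in the second step rigorous when the factors are infinite-dimensional and the multiplicities $|\Lambda_{ij}|$ (hence $E^0$ and $E^1$) are infinite: one must confirm that $\mathcal{M}_{ij}$ is complete, that the reconstruction sums and the defining SOT-sum for $Ad_\tau$ converge, and that the generator-level identities genuinely extend to a bounded Toeplitz representation of all of $X(E)$. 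Countability of the index sets, orthogonality of the ranges, and the isometric estimate $\|\sigma(x)\|^2=\|\pi(\langle x,x\rangle)\|=\|x\|^2$ should dispatch these convergence concerns.
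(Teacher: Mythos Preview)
Your approach is essentially the paper's: decompose $\alpha$ into factor-to-factor normal $*$-homomorphisms $\beta_{ij}$, extract implementing partial isometries, and take these as the edges of $E$; the paper simply cites Arveson's \cite[Proposition~2.1]{arveson} for the implementing isometries where you build the intertwiner space $\mathcal{M}_{ij}$ by hand, and the paper declares the Toeplitz relations ``trivial'' where you carefully verify mutual orthogonality of the ranges. The one point you assert without justification is $\sum_i P_i=1$ (needed for your claim $\{P_v\}'=W$): the theorem only gives $W\subseteq B(H)$, and the paper handles this by a brief WLOG---adjoin $P_0:=(\sum_i P_i)^\perp$ and $W_0:=P_0B(H)P_0$, and extend $\alpha$ by the identity on $W_0$.
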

\begin{proof} Without loss of generality we may assume that $\sum P_i=I$. If this were not the case we may define $P_0=(\sum P_i)^\perp$ and $W_0=P_0B(H)P_0$ and extend $\alpha$ to a normal $*$-endomorphism of $W\oplus W_0$ with $\alpha|_{W_0}=id$. 

For $i>0$ define $H_i=P_iH$. For $i,j>0$ and $x\in W$ define $\alpha_{ij}(x)=P_j\alpha(P_ix)$. Then $\alpha_{ij}$ restricts to a $*$-homomorphism between $B(H_i)=P_iB(H)P_i=W_i$ and $B(H_j)=P_jB(H)P_j=W_j$ as seen by
\begin{equation*}P_j\alpha(P_ix)P_j\alpha(P_iy)=P_j\left(P_j\alpha(P_ix)\right)\alpha(P_iy)=P_j\alpha(P_ixP_iy)=P_j\alpha(P_ixy).\end{equation*}

So $\alpha_{ij}$ is a $*$-homomorphism between two Type I factors, and thus by \cite[Proposition 2.1]{arveson} if $\alpha_{ij}$ is nonzero there exists $n_{ij}\in\N\cup\{\infty\}$ and isometries $V_k^{(ij)}\in B(H_i,H_j)$, $k=1,...,n_{ij}$ such that $\alpha_{ij}|_{B(H_i)}(T)=\sum_{k=1}^{n_{ij}}V_k^{(ij)}TV_k^{(ij)*}$.
We will identify the $V_k^{(ij)}$ with their associated partial isometries in $B(H)$, so that $V_k^{(ij)*}V_k^{(ij)}=P_i$ and $V_k^{(ij)}V_k^{(ij)*}\leq P_j$.

Set $E^0:=\{P_1,P_2...,\}$ and $E^1:=\bigcup_{i,j}\{V_k^{(ij)}:k=1,...,n_{ij}\}$. Define maps $r,s:E^1\to E^0$ by $r(V_k^{(ij)})=P_j$ and $s(V_k^{(ij)})=P_i$. Then $E=(E^0,E^1,r,s)$ is a graph. It is trivial to see that the identity maps on $E^1$ and $E^0$ extend to a Toeplitz representation of $X(E)$ which in turn induces a $*$-representation $\tau:\T_{X(E)}\to B(H)$.

Finally, we have that for each $x\in W$
\begin{equation*}\alpha(x)=\sum_{i,j>0} P_j\alpha(P_ix)=\sum_{i,j>0}\alpha_{ij}(x)=\sum_{i,j>0}\sum_{k=1}^{n_{ij}} V_k^{(ij)}xV_k^{(ij)*}=\sum_{f\in E^1}\tau(\delta_f)x\tau(\delta_f)^*\end{equation*}
as desired.
\end{proof}
As a consequence, the possible $*$-endomorphisms of a given von Neumann algebra $W=\{P_1,P_2,...\}'$ coincide, up to conjugacy, with the coherent unitary equivalence classes of $C^*$-correspondences over $C_0(\{P_1,P_2,...\})$. 

\subsection{Cuntz-Pimsner Algebras} To discuss the special case of unital $*$-endomorphisms we will need to briefly sketch the defining features and universal properties of the so-called Cuntz-Pimsner algebras. The relationship of these Cuntz-Pimnser algebras to our Toeplitz algebras is analogous to the relationship between the classical Toeplitz algebras $\E_n$ and classical Cuntz algebras $\O_n$. The Cuntz-Pimsner algebras were originally defined in \cite{pimsner} though our treatment will take its cues from \cite{fowlerraeburnmuhly} and \cite{fowlerraeburn}.

Recalling that $X(E)$ is a $C^*$-correspondence over $C_0(E^0)$, notice that $I=C_0(\{v\in\ E^0:r^{-1}(v)\text{ is finite }\})$ is a closed, two-sided ideal in $C_0(E^0)$. As noted in \cite[Proposition 4.4]{fowlerraeburn} (and recalling our modern reversal of $r$ and $s$ from the presentation in that work), elements of $I$ are precisely those elements of $C_0(E^0)$ whose left action on $X(E)$ is compact. Since our graphs have at most countable vertices and edges, $X(E)$ is countably generated as a $C^*$-correspondence over $C_0(E^0)$. It follows, \cite[Remark 3.9 following Definition 3.8]{pimsner} that the \emph{Cuntz-Pimnser algebra} of $X(E)$ is the $C^*$-algebra $\O_{X(E)}$ which is universal for Toeplitz representations $(\sigma,\pi)$ (as in Definition \ref{toeplitzrepdef}) which additionally satisfy:
\begin{enumerate}
\item[(iv)] $\displaystyle{\pi(a)=\sum_{f\in E^1}\sigma(\phi(a)\delta_f)\sigma(\delta_f)^*}$ for all $a\in I$.
\end{enumerate}
or, equivalently  (see \cite[Example 1.5]{fowlerraeburnmuhly}),
\begin{enumerate}
\item[(iv)$'$] $\displaystyle{\pi(\delta_v)=\sum_{f\in r^{-1}(v)}\sigma(\delta_f)\sigma(\delta_f)^*}$ for all $v\in E^0$ with $|r^{-1}(v)|<\infty$.
\end{enumerate}
Representations satisfying (i)-(iv)$'$ are often termed ``coisometric" Toeplitz representations. With this characterization of $\O_{X(E)}$ we may recognize that Theorem \ref{theorem3} has more to say when the endomorphism is unital.

\begin{cor} Let $W=\bigoplus W_i\subseteq B(H)$ be a countable sum of Type I factors. Let $P_1,P_2,...\in B(H)$ be projections such that $W_i=P_iB(H)P_i$. If $\alpha$ is a normal, unital $*$-endomorphism of $W$ then there exists a graph $E$ and $*$-representation $\tau:\O_{X(E)}\to B(H)$ such that $\alpha=Ad_\tau$.
\end{cor}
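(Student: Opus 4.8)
The plan is to run the construction of Theorem \ref{theorem3} essentially verbatim and then show that unitality of $\alpha$ forces the resulting Toeplitz representation to satisfy the additional Cuntz--Pimsner relation (iv)$'$, so that it descends to $\O_{X(E)}$. First I would apply Theorem \ref{theorem3} to the unital endomorphism $\alpha$. Since $\alpha(I_W)=I_W$, the preliminary reduction adjoining a complementary projection $P_0$ is unnecessary, and we may keep $E^0=\{P_1,P_2,\dots\}$ with $I_W=\sum_i P_i$ serving as the unit. This produces the graph $E$, the partial isometries $V_k^{(ij)}$ realizing each $\alpha_{ij}$ via the Arveson decomposition, and the representation $\tau:\T_{X(E)}\to B(H)$ with $\tau(\delta_{V_k^{(ij)}})=V_k^{(ij)}$ and $\tau(\delta_{P_j})=P_j$, satisfying $\alpha=Ad_\tau$.

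The heart of the argument is to verify relation (iv)$'$ at every vertex. Fix $j$. Evaluating the Arveson decomposition of $\alpha_{ij}$ at the unit $P_i$ of $W_i$ gives $\alpha_{ij}(P_i)=\sum_k V_k^{(ij)}(V_k^{(ij)})^*$, while by definition $\alpha_{ij}(P_i)=P_j\alpha(P_i)$. Summing over $i$ and using normality of $\alpha$ to pass the increasing SOT limit $\sum_i P_i=I_W$ through $\alpha$, I obtain
\begin{equation*}\sum_{f\in r^{-1}(P_j)}\tau(\delta_f)\tau(\delta_f)^*=\sum_i\sum_k V_k^{(ij)}(V_k^{(ij)})^*=\sum_i P_j\alpha(P_i)=P_j\alpha(I_W)=P_j=\tau(\delta_{P_j}),\end{equation*}
where the penultimate equality invokes unitality $\alpha(I_W)=I_W$ together with the orthogonality of the $P_l$. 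In particular this identity holds at every vertex $P_j$ with $|r^{-1}(P_j)|<\infty$, which is exactly the content of (iv)$'$. Hence the pair $(\sigma,\pi)$ with $\sigma=\tau|_{X(E)}$ and $\pi=\tau|_{C_0(E^0)}$ is a coisometric Toeplitz representation.

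Finally, by the universal property of the Cuntz--Pimsner algebra quoted above, the coisometric representation $(\sigma,\pi)$ factors through the universal coisometric representation, inducing a $*$-homomorphism $\bar\tau:\O_{X(E)}\to B(H)$. Since $Ad_{\bar\tau}$ is computed from the images $\bar\tau(\delta_e)=\sigma(\delta_e)=V_k^{(ij)}$, which are precisely the operators appearing in $Ad_\tau$, we conclude $\alpha=Ad_\tau=Ad_{\bar\tau}$, as required.

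I expect the only real content to lie in the middle step: the passage from unitality to (iv)$'$. The two points needing care there are the use of \emph{normality} to justify $\sum_i\alpha(P_i)=\alpha(\sum_i P_i)$ as an SOT-convergent identity, and the bookkeeping that the range projections indexed by $r^{-1}(P_j)$ are exactly the $V_k^{(ij)}(V_k^{(ij)})^*$ ranging over all sources $i$. Everything surrounding this is a direct appeal to Theorem \ref{theorem3} and to the universal property of $\O_{X(E)}$.
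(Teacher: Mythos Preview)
Your proposal is correct and follows essentially the same route as the paper: run the construction of Theorem~\ref{theorem3}, then use unitality to compute $P_j=P_j\alpha(I_W)=\sum_i P_j\alpha(P_i)=\sum_{i,k}V_k^{(ij)}(V_k^{(ij)})^*$, which is exactly (iv)$'$, and invoke the universal property of $\O_{X(E)}$. One small inaccuracy: the $P_0$ reduction in Theorem~\ref{theorem3} is about making $\sum P_i=I_H$ (nondegeneracy in $B(H)$), not about unitality of $\alpha$ on $W$, so unitality alone does not render it unnecessary---but this has no bearing on the argument, since your computation only uses $\alpha(I_W)=I_W$ with $I_W=\sum_i P_i$.
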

\begin{proof} We define the partial isometries $V_k^{(ij)}$ and graph $E$ in the same manner as in the proof of Theorem \ref{theorem3}. Since $\alpha$ is unital we have

$$P_n=P_n\alpha(I)=\sum_{i,j>0}\sum_{k=1}^{n_{ij}} P_nV_k^{(ij)}V_k^{(ij)*}=\sum_{i>0}\sum_{k=1}^{n_{in}} V_k^{(in)}V_k^{(in)*}$$
for every $P_n$. Recalling that the representation of $(\sigma,\pi)$ $E$ on $B(H)$ is the pair of identity maps, this becomes
$$\pi(\delta_v)=\sum_{f\in E^1}\pi(\delta_v)\sigma(\delta_f)\sigma(\delta_f)^*=\sum_{f\in r^{-1}(v)}\sigma(\delta_f)\sigma(\delta_f)^*$$
for all $v\in E^0$. In particular this holds for all edges $v$ with $r^{-1}(v)$ finite (i.e.\ all $P_j$ such that $\{V_k^{(ij)}: i>0, k=1,...,n_{ij}\}$ is finite). Thus condition (iv)$'$ is satisfied and the identity maps on $E^0$ and $E^1$ induce a representation of $\O_{X(E)}$. The fact that $\alpha=Ad_\tau$ follows as before.
\end{proof}

\bibliographystyle{plain}
\bibliography{gipsonbibl}
\end{document}